\begin{document}
\newtheorem{thm}{Theorem}[section]
\newtheorem{lem}[thm]{Lemma}
\newtheorem{prop}[thm]{Proposition}
\newtheorem{cor}[thm]{Corollary}
\newtheorem{conj}[thm]{Conjecture}%[section]
\newtheorem{proj}[thm]{Project}%[section]
\newtheorem*{remark}{Remark}

%\theoremstyle{definition}
%\newtheorem*{defn}{Definition}
%\newtheorem*{remark}{Remark}
%\newtheorem{exercise}{Exercise}
%\newtheorem*{exercise*}{Exercise}

%\numberwithin{equation}{section}

\makeatletter
\@namedef{subjclassname@2020}{\textup{2020} Mathematics Subject Classification}
\makeatother

\newcommand{\rad}{\operatorname{rad}}

\newcommand{\Z}{{\mathbb Z}} %cph changed from \mathbf
\newcommand{\Q}{{\mathbb Q}}
\newcommand{\R}{{\mathbb R}}
\newcommand{\C}{{\mathbb C}}
\newcommand{\N}{{\mathbb N}}
\newcommand{\FF}{{\mathbb F}}
\newcommand{\fq}{\mathbb{F}_q}
\newcommand{\rmk}[1]{\footnote{{\bf Comment:} #1}}

\renewcommand{\mod}{\;\operatorname{mod}}
\newcommand{\ord}{\operatorname{ord}}
\newcommand{\TT}{\mathbb{T}}
\renewcommand{\i}{{\mathrm{i}}}
\renewcommand{\d}{{\mathrm{d}}}
\renewcommand{\^}{\widehat}
\newcommand{\HH}{\mathbb H}
\newcommand{\Vol}{\operatorname{vol}}
\newcommand{\area}{\operatorname{area}}
\newcommand{\tr}{\operatorname{tr}}
\newcommand{\norm}{\mathcal N} % norm =(\frac{ n+\sqrt{n^2-4}} 2)^2
\newcommand{\intinf}{\int_{-\infty}^\infty}
\newcommand{\ave}[1]{\left\langle#1\right\rangle} %  average
\newcommand{\Var}{\operatorname{Var}}
\newcommand{\Prob}{\operatorname{Prob}}
\newcommand{\sym}{\operatorname{Sym}}
\newcommand{\disc}{\operatorname{disc}}
\newcommand{\CA}{{\mathcal C}_A}
\newcommand{\cond}{\operatorname{cond}} % conductor
\newcommand{\lcm}{\operatorname{lcm}}
\newcommand{\Kl}{\operatorname{Kl}} %Kloosterman sum
\newcommand{\leg}[2]{\left( \frac{#1}{#2} \right)}  % Legendre symbol
\newcommand{\Li}{\operatorname{Li}}

\newcommand{\sumstar}{\sideset \and^{*} \to \sum}

\newcommand{\LL}{\mathcal L} %L-function of u
\newcommand{\sumf}{\sum^\flat}
\newcommand{\Hgev}{\mathcal H_{2g+2,q}}
\newcommand{\USp}{\operatorname{USp}}
\newcommand{\conv}{*}
\newcommand{\dist} {\operatorname{dist}}
\newcommand{\CF}{c_0} % Fejer constant
\newcommand{\kerp}{\mathcal K}

\newcommand{\Cov}{\operatorname{cov}}
\newcommand{\Sym}{\operatorname{Sym}}

\newcommand{\Ht}{\operatorname{Ht}}

\newcommand{\E}{\operatorname{\mathbb E}} % expectation
\newcommand{\sign}{\operatorname{sign}} %sign
\newcommand{\meas}{\operatorname{meas}} %sign

\newcommand{\divid}{d} % the divisor function

\newcommand{\GL}{\operatorname{GL}}
\newcommand{\SL}{\operatorname{SL}}
\newcommand{\re}{\operatorname{Re}}
\newcommand{\im}{\operatorname{Im}}
\newcommand{\res}{\operatorname{Res}}

 \newcommand{\EWp}{\mathbb E^{\rm WP}_g} % Weil Petersson expectation
\newcommand{\orb}{\operatorname{Orb}}
\newcommand{\supp}{\operatorname{Supp}}
\newcommand{\mmfactor }{\textcolor{red}{c_{\rm Mir}}}
\newcommand{\Mg}{\mathcal M_g} % moduli space
\newcommand{\MCG}{\operatorname{Mod}} % mapping class group
\newcommand{\Diff}{\operatorname{Diff}} % diffeomorphism
\newcommand{\If}{I_f(L,\tau)}
\newcommand{\GOE}{\operatorname{GOE}}
\newcommand{\Ex}{\mathcal E} %truncated exponential polynomial
\newcommand{\alow}{\mathbf a}
\newcommand{\adag}{\alow^\dagger} %raising operators
\newcommand{\spec}{\mathcal X}
\newcommand{\eigen}{E} % the eigenvalue

\title{The Quantum Rabi model: Towards Braak's conjecture}
\date{\today}
\author{Ze\'ev Rudnick}
\address{School of Mathematical Sciences, Tel Aviv University, Tel Aviv 69978, Israel}
\email{rudnick@tauex.tau.ac.il}
 % \thanks{%We thank Masato Wakayama for introducing us to the problem. 
 % This research was supported by the European Research Council (ERC) under the European Union's Horizon 2020 research and innovation programme (grant agreement No. 786758). 
 % and by the Israel Science Foundation (grant No. 1881/20).  
 %  }
 \begin{abstract}
 We establish a density one version of Braak's conjecture on the fine structure of  the spectrum of the quantum Rabi model, as well as a recent conjecture of  Braak, Nguyen, Reyes-Bustos and Wakayama on the nearest neighbor spacings of the spectrum. The proof uses a three-term asymptotic expansion for large eigenvalues due to Boutet de Monvel and Zielinski, and a number theoretic argument from uniform distribution theory.
 \end{abstract}
%\keywords{Quantum Rabi model, Braak's conjecture, uniform distribution modulo one}
%\subjclass[2020]{81Q10, 81V80, 11J71}
%% 81V80 - Quantum optics  81Q10 - Selfadjoint operator theory in quantum theory, including spectral analysis 11J71 - Distribution modulo one
 \maketitle

\section{Introduction}
In this note we address a conjecture of Braak \cite{Braak} about the fine structure of the spectrum of the quantum Rabi model (QRM),   a fundamental model  of light-matter interaction,  
%, which  has played an important role in quantum optics for several decades. 
% A brief description is as follows: 
which describes  the interaction between  a two-level atom (qubit) coupled to a quantized, single-mode harmonic oscillator, see the survey \cite{XZBL}.   
%a single mode bosonic field, with frequency $\omega=1$, 
%and matter, assumed to be a two-level system  (qubit) with level splitting $2\Delta$. 

The Hamiltonian of the system is
\[
H_{}= \adag \alow + \Delta \sigma_z  + g\sigma_x (\alow + \adag) 
\]
where $\sigma_x=\left(\begin{smallmatrix}0&1\\1&0 \end{smallmatrix}\right)$, $\sigma_z=\left(\begin{smallmatrix}1&0\\0&-1 \end{smallmatrix}\right)$ are the Pauli matrices of the two-level system, assumed to have level splitting $2\Delta$;  $\adag$ and $\alow$ are the  creation and annihilation operators of  the harmonic oscillator %(a single bosonic mode)  
with   frequency set to be unity;    and $g>0$ measures the strength of the coupling between the systems.  

%the state space of the model can be takes to be $\mathcal H=L^2(\R)\otimes \C^2$ and the Hamiltonian  can be written as 
%\[
%H_{}= \adag \alow  + g\sigma_x (\alow + \adag) + \Delta \sigma_z
%\]
%where $\adag$, $\alow$ are the standard creation and annihilation operators, $\sigma_x=\left(\begin{smallmatrix}0&1\\1&0 \end{smallmatrix}\right)$, $\sigma_z=\left(\begin{smallmatrix}1&0\\0&-1 \end{smallmatrix}\right)$ are Pauli matrices, 
%and $g>0$, $\Delta>0$ are certain parameters of physical significance (other parameters have been set to unity). 
The Rabi Hamiltonian  commutes with a parity operator $P=(-1)^{ \adag \alow} \sigma_z$,    and hence the Hilbert space of states decomposes into the $\pm 1$-eigenspaces of $P$ which are preserved by $H_{}$.  
The   Rabi model Hamiltonian in each of the parity eigenspaces can be described 
by the Jacobi matrices 
\[
%A_\pm=
\begin{pmatrix} d_\pm(0)& a(1)&0& 0& \dots  \\
a(1) & d_\pm(1) & a(2) &0&\dots \\
0& a(2)& d_\pm(2) &a(3) &\dots \\
\dots
\end{pmatrix}
\]
 with
 \[
 d_\pm(k) = k\pm (-1)^k \Delta, \qquad a(k) =  g\sqrt{k} .
 \]

The spectrum of $H_{}$ breaks up into a union of two parity classes. The spectrum in each parity class is non-degenerate, and this allows a unique labeling of the corresponding eigenvalues in increasing order 
\[
\eigen^+_1<\eigen^+_2<\dots <\eigen^+_n<\dots
\]
 and likewise  for the negative parity class $\{\eigen_n^-\}$. 

The  eigenvalues in each parity class satisfy $\eigen_n^\pm =n-g^2+o(1)$ as $n\to \infty$ \cite{Tur, Yanovich}, so that for $n$ sufficiently large, each interval $[n,n+1]$ contains at most $4$ shifted eigenvalues $\eigen_n^\pm+g^2$. 
%and we have a sharp Weyl law $\#\{\eigen_n^\pm  \leq x\} = x+O(1)$.  
 Braak \cite{Braak} conjectured that
 \begin{conj}[Braak's G-conjecture]\label{conj:Braak}
  For a given parity class, all intervals $[n,n+1]$ contains at most two shifted eigenvalues, 
  two intervals  containing no shifted eigenvalues are not adjacent, and   two intervals containing two shifted eigenvalues are also not adjacent.  
  \end{conj}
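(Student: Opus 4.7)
The plan is to reduce Braak's G-conjecture to a sign analysis on the residuals $\delta_n^\pm := \eigen_n^\pm + g^2 - n$, which satisfy $\delta_n^\pm = o(1)$. As soon as $|\delta_m^\pm|,|\delta_{m+1}^\pm|<1/2$, only the shifted eigenvalues indexed by $n=m$ and $n=m+1$ can lie in $[m,m+1]$, and they do so precisely when $\delta_m^\pm\ge 0$ and $\delta_{m+1}^\pm<0$, respectively. Writing
\[
c_m^\pm := \mathbf{1}[\delta_m^\pm\ge 0] + \mathbf{1}[\delta_{m+1}^\pm<0]
\]
for the count in $[m,m+1]$, all three assertions of the conjecture become purely combinatorial: $c_m^\pm\le 2$ is immediate; $c_m^\pm=c_{m+1}^\pm=0$ would force $\delta_{m+1}^\pm$ to be simultaneously non-negative and negative; and $c_m^\pm=c_{m+1}^\pm=2$ is the dual, equally impossible case.

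The full conjecture is therefore equivalent to (i) $|\delta_n^\pm|<1/2$ for every $n$, and (ii) the ability to determine the sign of $\delta_n^\pm$ for every $n$. The Boutet de Monvel--Zielinski three-term asymptotic expansion yields an effective description of $\delta_n^\pm$ as a leading oscillatory term of size $n^{-1/2}$, a subleading term of size $n^{-1}$, and a controlled remainder. For a density-one set of $n$ the leading term is bounded below in absolute value, determines the sign of $\delta_n^\pm$, and, combined with the combinatorial reduction above, proves the conjecture at those indices; this is essentially the content of the density-one theorem addressed in the paper.

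The main obstacle for the unconditional statement is the exceptional set of indices where the leading oscillatory term is too close to zero for the three-term expansion to pin down the sign of $\delta_n^\pm$. To remove this obstruction, one would need either an extension of the Boutet de Monvel--Zielinski expansion by at least one more order, so that the next term becomes sign-determining exactly when the leading one vanishes, together with a Diophantine-type non-degeneracy statement forbidding the simultaneous vanishing of both terms at an integer $n$; or an entirely non-asymptotic argument, such as a WKB or Riemann--Hilbert matching in a shrinking neighborhood of each transition index, that controls $\delta_n^\pm$ without relying on a truncated power-series expansion. Once such a refined control is available, the residual finite block of small indices below the regime of effectiveness can be dispatched by rigorous numerical enclosures of the eigenvalues of the explicit Jacobi tridiagonal matrix, via Sturm sequences on sufficiently large truncations together with Gerschgorin-type bounds on the tail. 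The genuinely hard steps are the extension of the asymptotic expansion to the order required near its oscillatory zeros, and the arithmetic non-degeneracy statement that rules out exact simultaneous vanishing at integer points; these are what force the present result to be confined to a density-one subset.
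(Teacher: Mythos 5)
The statement you were handed is a conjecture: the paper does not prove it (it proves the density-one version, Theorem~\ref{main thm}), and neither do you --- your text is a reduction plus a research program, as you acknowledge in your last paragraph. So there is no proof to match on either side, and your proposal cannot be accepted as a proof. That said, your combinatorial reduction is correct and is genuinely different from anything in the paper: once $|\delta_n^\pm|<1/2$ for all indices in play, only $x_m^\pm$ and $x_{m+1}^\pm$ can lie in $[m,m+1]$, so ``at most two'' is immediate, and the two non-adjacency clauses follow formally because ``$[m,m+1]$ empty'' forces $\delta_{m+1}^\pm>0$ while ``$[m+1,m+2]$ empty'' forces $\delta_{m+1}^\pm<0$ (dually for doubly occupied intervals, up to the degenerate case $\delta_{m+1}^\pm=0$, i.e.\ an exceptional eigenvalue sitting at the shared endpoint of the two closed intervals). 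Notice that this undercuts your own item (ii): determining the \emph{sign} of $\delta_n^\pm$ is not needed for the conjecture as stated, only for the stronger alternating $2$--$0$--$2$--$0$ pattern that Theorem~\ref{main thm} establishes at good $n$. The obstruction you devote most of your effort to --- extending the Boutet de Monvel--Zielinski expansion near the zeros of $\cos\theta_n$ so as to pin down the sign at bad $n$ --- is therefore aimed at the wrong target for this particular statement.

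The genuine gaps are elsewhere. First, your reduction needs $|\delta_n^\pm|<1/2$ for \emph{every} $n$, but the input $\eigen_n^\pm=n-g^2+o(1)$ is non-effective: no explicit threshold $n_0(g,\Delta)$ is available, and below it no asymptotic expansion, however many terms, says anything; your fallback of rigorous numerics on truncated Jacobi matrices can only handle one fixed pair $(g,\Delta)$ at a time, not the full two-parameter family the conjecture concerns. Second, you misquote the expansion: in \eqref{eq:BZ} the oscillatory term has size $n^{-1/4}$ and the remainder is $O(n^{-1/2+o(1)})$, not $n^{-1/2}$ and $n^{-1}$; this is not cosmetic, since it is precisely the gap between $n^{-1/4}$ and $n^{-1/2+o(1)}$ that dictates the paper's goodness condition $|\cos\theta_n|>N^{-1/4+\delta}$ and hence the exceptional-set exponent $3/4$. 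For contrast, the paper's route is to use the sign of $(-1)^n\cos\theta_n$ at good $n$ to place each of $x_{n-2}^\pm,\dots,x_{n+2}^\pm$ in a definite unit interval, and to bound the bad set via Fej\'er's equidistribution of $\sqrt n$ modulo one; it neither needs nor obtains any conclusion at the bad indices.
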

% See the discussion in \cite[Section 4.5]{BNRW}.    

\noindent In this note, we show that Braak's conjecture holds for ``almost all'' $n$, in the following sense:

%Moreover, it is believed that an interval  with 2 roots of  can only be adjacent to an interval with 1 or 0 roots. 
%Similarly an empty interval can never be adjacent to another empty interval.
  
  \begin{thm}\label{main thm}
 Fix $\Delta>0$ and $g>0$. For all but at most $O(N^{3/4+o(1)})$ values of $n\leq N$, the interval $(n,n+1)$ contains exactly two shifted eigenvalues of one of the parity classes,  and none for the other parity class, while the adjacent intervals $(n-1,n)$ and $(n+1,n+2)$ contain exactly two eigenvalues of the other parity class  and none of the first parity class. Moreover, neither $n $ nor $n\pm 1 $ are shifted eigenvalues.  
 
 In particular, almost all intervals $[n,n+1]$ contain exactly two elements of the shifted spectrum.  
    \end{thm}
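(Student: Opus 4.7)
The plan is to combine the Boutet de Monvel--Zielinski three-term asymptotic expansion for $\eigen_n^\pm$ with a discrepancy estimate for the sequence $\{4g\sqrt{n}\}$ modulo $2\pi$, translating the theorem's combinatorial statement into a single quantitative condition on an oscillating function of $\sqrt{n}$.

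First I would write
\[
\eigen_n^\pm + g^2 = n + r_n^\pm, \qquad r_n^\pm = \frac{A_\pm(n)}{\sqrt{n}} + \frac{B_\pm(n)}{n} + O(n^{-3/2}),
\]
with $A_\pm,\,B_\pm$ explicit bounded functions of $n$. The structural feature, inherited from the alternating diagonal $d_\pm(k)=k\pm(-1)^k\Delta$, is that the leading coefficient takes the form $A_\pm(n)=\pm(-1)^n\,\Phi(4g\sqrt{n})$ for a single trigonometric polynomial $\Phi$ depending only on $g$ and $\Delta$; in particular, the two parity classes differ at leading order only by an overall sign.

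The second step is to translate the theorem's configuration at $n$ into a sign condition. Since $|r_k^\pm|=O(k^{-1/2})$, each shifted eigenvalue $k+r_k^\pm$ lies within distance $1/2$ of the integer $k$ for large $k$, and its placement in $(k-1,k)$ versus $(k,k+1)$ is governed by $\sign(r_k^\pm)$. A direct check shows that the theorem's conclusion at $n$ is equivalent to the statement that, for some parity $\sigma\in\{+,-\}$,
\[
\sign(r_k^{\sigma})=(-1)^{k-n},\qquad \sign(r_k^{-\sigma})=(-1)^{k-n+1},\qquad k\in\{n-1,n,n+1,n+2\},
\]
together with $r_k^\pm\neq 0$ at those $k$. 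Using the form of $A_\pm$ above, the entire pattern follows from the single quantitative statement
\[
|\Phi(4g\sqrt{n})|>C\,n^{-1/2}
\]
for a constant $C$ large enough to absorb $B_\pm(n)/n$ and the remainder $O(n^{-3/2})$. (The phase $4g\sqrt{k}$ changes by only $O(k^{-1/2})$ from one $k$ to the next, so a lower bound at $k=n$ propagates to the four relevant values.)

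The third step is the number-theoretic input. One application of van der Corput's second-derivative test to the exponential sum $\sum_{n\le N}e(m\cdot 2g\sqrt{n}/\pi)$ gives $O(N^{3/4}m^{-1/2})$ for each $m\ne0$, and the Erd\H{o}s--Tur\'an inequality then yields the discrepancy bound $D_N\ll N^{-1/4+o(1)}$ for $\{4g\sqrt{n}/(2\pi)\}_{n\le N}$ modulo $1$. A dyadic decomposition in $n$ then shows
\[
\#\bigl\{n\le N:\,|\Phi(4g\sqrt{n})|\le C\,n^{-1/2}\bigr\}\ll N^{3/4+o(1)},
\]
which is the desired bound on the exceptional set.

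The main difficulty I foresee is in extracting from the Boutet de Monvel--Zielinski construction the precise algebraic form $A_\pm(n)=\pm(-1)^n\,\Phi(4g\sqrt{n})$, with both parities sharing $\Phi$ up to an overall sign. This $(-1)^n$ alternation, descending from the $(-1)^k$ in the diagonal $d_\pm(k)$, is exactly what drives Braak's $(2,0,2,0,\ldots)$ pattern rather than the generic $(1,1,1,\ldots)$ interleaving; it is the structural reason the conjecture should be true. Once that feature is confirmed, the remainder---elementary sign analysis combined with classical discrepancy theory for $\sqrt{n}\bmod 1$---is routine.
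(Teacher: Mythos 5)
Your proposal follows essentially the same route as the paper: quote the Boutet de Monvel--Zielinski asymptotics, reduce the configuration of the four shifted eigenvalues near $[n,n+1]$ to a sign condition on an oscillating leading term, and bound the exceptional set via equidistribution of $\sqrt{n}$ modulo one. Two points of comparison are worth making. First, the structural feature you flag as the main difficulty --- a leading correction of the form $\pm(-1)^n\Phi(4g\sqrt{n})$ with a single profile $\Phi$ shared by the two parities --- does not need to be extracted from the BZ construction: it is the explicitly stated form of their result, namely $\eigen_n^\pm = n-g^2\mp\frac{\Delta}{\sqrt{2\pi g}}(-1)^n\cos\left(4g\sqrt{n}-\frac{\pi}{4}\right)n^{-1/4}+O(n^{-1/2+o(1)})$. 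However, your exponents are wrong: the oscillating term has size $n^{-1/4}$, not $n^{-1/2}$, and the remainder is only $O(n^{-1/2+o(1)})$, not $O(n^{-3/2})$. The correct threshold is therefore $|\cos(4g\sqrt{n}-\tfrac{\pi}{4})|>n^{-1/4+\delta}$, and the exceptional count is then dominated by the measure $\asymp N^{-1/4+\delta}$ of the bad arc rather than by the discrepancy; the final bound $N^{3/4+o(1)}$ happens to come out the same either way, but the bookkeeping you describe is not the one that actually occurs. Second, for the counting step you invoke van der Corput plus Erd\H{o}s--Tur\'an to get discrepancy $N^{-1/4+o(1)}$, whereas the paper uses Fej\'er's elementary interval-counting argument for $((a\sqrt{n}+\gamma))$, which gives the sharper error $O(\sqrt{N})$ and needs no exponential sums; your heavier machinery still suffices here since $N\cdot N^{-1/4+o(1)}$ does not exceed the main term once the threshold is corrected. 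With these repairs the argument goes through and matches the paper's.
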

  
  Concerning the last assertion, there are special choices of the parameters $g$ and $\Delta$ for which there are ``exceptional'' eigenvalues $\eigen$ such that $\eigen+g^2$ is an integer, see \cite[\S 3.2]{XZBL} and the references therein, and our theorem excludes $n-g^2$ being one of these eigenvalues for almost all $n$. 
  
 %\begin{thm}\label{main thm}
% Denote by  $\spec^\pm  = \{x_n^\pm:=\eigen_n^\pm+g^2\}$  the shifted spectra in each parity class.  
%For almost all $n$,  either
%
%a) 
%  \[
%\spec^- \cap (n,n+1) =\{x_n^-,x_{n+1}^-\}, \quad \spec^+ \cap (n,n+1) = \emptyset 
%\]
%\[
%  \spec^- \cap (n-1,n) = \emptyset=\spec^- \cap (n+1,n+2)
%\] 
%and 
% \[
%  \spec^+ \cap (n-1,n) = \{x_{n-2}^+, x_{n-1}^+\}, \quad \spec^+ \cap (n+1,n+2) = \{x_{n+1}^+, x_{n+2}^+\}
%\] 
% or 
% 
% b) the same holds with the roles of $\spec^+$, $\spec^-$ reversed. 
 % 
%  Moreover the number of $n\leq N$ for which the interval $[n,n+1]$  violates  Braak's conjecture is at most $N^{1/2+o(1)}$. 
%\end{thm}

An application of Theorem~\ref{main thm} is to prove a recent conjecture of Braak, Nguyen, Reyes-Bustos and Wakayama \cite{BNRW} on the nearest neighbor spacings of the full spectrum. 
Denote by $\{\eigen_k\}$   the ordered eigenvalues of $H_{}$ of both parity classes:
\[
\eigen_1\leq \eigen_2 \leq \dots
\]
In \cite{BNRW}, the nearest neighbor spacings $\delta_n:=\eigen_{n+1}-\eigen_n$ were classified into three types: 
{\em positive} if both $\eigen_n,\eigen_{n+1}  $ fell into the positive parity class, {\em negative} if both fell into the negative parity class, and {\em mixed} if one of the pair was positive and one negative.   
Based on numerical observation, it was conjectured \cite[eq 14]{BNRW} that 
 % the frequencies of the three different types of nearest neighbor spacings are $1/4$,$1/4$,$1/2$, respectively. 
 \begin{conj} [Spacings conjecture for the QRM]\label{conj:spacings}
  The frequencies of the three different types of nearest neighbor spacings are $1/4$,$1/4$,$1/2$, respectively. 
  \end{conj}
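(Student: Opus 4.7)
The plan is to derive Conjecture~\ref{conj:spacings} from Theorem~\ref{main thm} by direct counting. First I would classify each good $n \leq N$ as \emph{positive-good} or \emph{negative-good} according to the parity class of the two shifted eigenvalues in $(n, n+1)$, writing $G_N^\pm$ for these sets and $B_N$ for the complementary bad set (so $|B_N| = O(N^{3/4 + o(1)})$). The theorem's neighbor condition means that any positive-good $n$ is flanked by negative-good or bad intervals on both sides, and vice versa, so within any maximal run of consecutive good $n$ the parities alternate.

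The next step is to translate the three spacing types into interval counts. A positive-good $n$ contributes exactly one positive-type spacing --- the gap $a_2 - a_1$ between its two enclosed eigenvalues $a_1 < a_2$ --- and, whenever $n+1$ is also good (hence negative-good by alternation), exactly one mixed spacing between $a_2$ and the bottom of the next interval (no other shifted eigenvalue lies in between, since $n+1$ itself is not an eigenvalue by Theorem~\ref{main thm}); the analogous statement holds for negative-good $n$. Since each interval contains at most four shifted eigenvalues, each bad $n$ affects at most $O(1)$ spacings, so I would obtain
\[
|G_N^+| + O(|B_N|), \qquad |G_N^-| + O(|B_N|), \qquad |G_N^+| + |G_N^-| + O(|B_N|)
\]
for the counts of positive-type, negative-type, and mixed spacings among $\{\delta_k : \eigen_k \leq N\}$.

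To finish, I would use alternation within each maximal good run to bound $\bigl||G_N^+| - |G_N^-|\bigr|$ by the number of runs, which is at most $|B_N| + 1$; combined with $|G_N^+| + |G_N^-| = N - |B_N|$ this yields $|G_N^\pm| = N/2 + O(N^{3/4 + o(1)})$. Since the total number of shifted eigenvalues in $[0, N]$ is $2N + O(1)$ by the asymptotic $\eigen_n^\pm = n - g^2 + o(1)$, division gives the claimed frequencies $1/4$, $1/4$, $1/2$ with effective error $O(N^{-1/4 + o(1)})$. The substantive obstacle --- ruling out atypical intervals that contain three or four eigenvalues of one parity class --- is already surmounted by Theorem~\ref{main thm}, so the remaining argument is combinatorial bookkeeping, the only real care being to confirm that spacings incident to a bad interval contribute at most $O(|B_N|)$ to each type.
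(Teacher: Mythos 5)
Your argument is correct and is essentially the deduction the paper has in mind: the paper disposes of Conjecture~\ref{conj:spacings} in one sentence, asserting it follows because Braak's conjecture holds for $100\%$ of $n$'s, and your proposal is a careful write-up of exactly that counting (one same-type spacing per good interval, one mixed spacing per adjacent good pair, alternation forcing $|G_N^+|=|G_N^-|+O(N^{3/4+o(1)})$, bad intervals absorbed into the error). The bookkeeping you supply, including the $O(1)$ bound on spacings incident to each bad interval, is sound and fills in details the paper leaves implicit.
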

 This clearly follows from the full conjecture of Braak, but since we establish that Braak's conjecture holds for $100\%$ of $n's$,  we have also established Conjecture~\ref{conj:spacings}.

 Finally, we examine the value distribution of the normalized deviations
\[
\delta_n^\pm:= n^{1/4}\left(\eigen_n^\pm -\left(n-g^2 \right) \right).
\]
As an application of the method of proof of Theorem~\ref{main thm}, we show that the deviations in each parity class satisfy an arcsine law: 
% see Figure~\ref{fig:arcsine}:
\begin{thm}\label{thm:arcsine law}
For any subinterval $[\alpha,\beta]\subset [ -\frac{\Delta}{\sqrt{2\pi g}}, \frac{\Delta}{\sqrt{2\pi g}}]$, we have
\[
\lim_{N\to \infty} \frac 1N \#\Big\{n\leq N: \delta_n^\pm \in [\alpha,\beta]\Big\} 
= \int_\alpha^\beta \frac {dy}{\pi \sqrt{ \frac{2\pi g}{\Delta^2} -y^2}}  .
\]
\end{thm}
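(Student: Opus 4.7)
The plan is to combine the Boutet de Monvel--Zielinski three-term asymptotic (the same input that drives Theorem~\ref{main thm}) with a Weyl-type equidistribution statement for the sequence $\sqrt{n}$. Their expansion should give, after isolating the leading oscillatory term,
\[
\eigen_n^\pm = n - g^2 + \frac{R \cos\bigl(\omega\sqrt{n}+\phi_\pm\bigr)}{n^{1/4}} + O\bigl(n^{-3/4+o(1)}\bigr),
\]
with amplitude $R=\Delta/\sqrt{2\pi g}$, a fixed frequency $\omega>0$ explicit in $g$, and phases $\phi_\pm$ depending on the parity class and on the parameters $(\Delta,g)$. Multiplying by $n^{1/4}$ yields
\[
\delta_n^\pm = R\cos\bigl(\omega\sqrt{n}+\phi_\pm\bigr) + O\bigl(n^{-1/2+o(1)}\bigr),
\]
so the value distribution of $\delta_n^\pm$ coincides with that of the deterministic trigonometric sequence $R\cos(\omega\sqrt{n}+\phi_\pm)$.

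Granting this reduction, the theorem becomes a standard equidistribution plus pushforward argument. First I would invoke the classical fact (due to Fej\'er, or a direct consequence of Weyl's criterion combined with van der Corput's lemma applied to the exponential sum $\sum_{n\leq N} e^{\i h \omega \sqrt{n}}$ for each nonzero integer $h$) that $\omega\sqrt{n}+\phi_\pm$ is uniformly distributed modulo $2\pi$. Second, I would compute the pushforward of the uniform measure on $[0,2\pi)$ under $\theta\mapsto R\cos\theta$, which has the arcsine density $1/\bigl(\pi\sqrt{R^2-y^2}\bigr)$ supported on $[-R,R]$. The fixed phase $\phi_\pm$ is absorbed by the equidistribution, and no boundary care is needed for the subinterval $[\alpha,\beta]$ since the arcsine distribution has no atoms.

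The main obstacle is the analytic half: extracting the precise leading-order amplitude $R=\Delta/\sqrt{2\pi g}$ and the oscillating phase $\omega\sqrt{n}+\phi_\pm$ from the Boutet de Monvel--Zielinski expansion, and verifying that the remainder is uniformly $o(n^{-1/4})$ for $n\leq N$ (anything better than $n^{-1/4}$ will do). This should be an explicit bookkeeping exercise with their formulas rather than a conceptual hurdle. Once the expansion is recast in the oscillatory form above, the number-theoretic half, namely Weyl equidistribution of $\sqrt{n}$ modulo $2\pi$ and the elementary pushforward computation, closes the argument in a few lines.
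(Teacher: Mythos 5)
Your overall strategy---recast the Boutet de Monvel--Zielinski expansion as an oscillatory term plus a decaying remainder, invoke Fej\'er/Weyl equidistribution of $\sqrt{n}$ modulo one, and push forward the uniform measure under $\theta\mapsto R\cos\theta$ to get the arcsine density---is exactly the paper's argument. However, your stated reduction contains a concrete error: the BZ expansion is
\[
\eigen_n^\pm = n-g^2 \mp \frac{\Delta}{\sqrt{2\pi g}}\,\frac{(-1)^n\cos\bigl(4g\sqrt{n}-\tfrac{\pi}{4}\bigr)}{n^{1/4}} + O\bigl(n^{-1/2+o(1)}\bigr),
\]
and the alternating factor $(-1)^n$ cannot be absorbed into a fixed phase $\phi_\pm$: the sequence $(-1)^n\cos(\omega\sqrt{n}+\phi)$ is not of the form $\cos(\omega\sqrt{n}+\phi')$ for any constant $\phi'$, so the identity $\delta_n^\pm = R\cos(\omega\sqrt{n}+\phi_\pm)+O(n^{-1/2+o(1)})$ is false as written (also, the remainder after multiplying by $n^{1/4}$ is only $O(n^{-1/4+o(1)})$, which still tends to zero and suffices). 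The repair is routine and is what the paper does: split $n\le N$ into even and odd subsequences, on each of which $(-1)^n$ is constant; Fej\'er's theorem applies equally to $\sqrt{2m}$ and $\sqrt{2m+1}$, and since the pushforward of the uniform measure under $\theta\mapsto \pm R\cos\theta$ is the same symmetric arcsine law, both halves contribute the identical limiting density and recombine to give the stated result. With that adjustment your argument closes; without it the reduction step does not hold.
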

%\marginpar{Do we divide by $\pi $ or $2\pi$?}

The proof of Theorem~\ref{main thm} starts with an approximation to the eigenvalues due to Boutet de Monvel and Zielinski \cite{BZ}    and concludes with a number-theoretic argument. 

\bigskip

 \noindent{\bf Acknowledgement:} I thank  Daniel Braak, Eduard Ianovich and Masato Wakayama for helpful discussions, and a referee for corrections to an earlier version of the paper. 
 
 This research was supported by the European Research Council (ERC) under the European Union's Horizon 2020 research and innovation programme (grant agreement No. 786758).

\section{The case of good $n$'s}

 Boutet de Monvel and Zielinski \cite{BZ} proved a three term expansion for the eigenvalues in each parity class:
\begin{equation}\label{eq:BZ}
\eigen_n^\pm = n-g^2 \mp \frac{\Delta}{\sqrt{2\pi g}}  \frac{(-1)^n \cos (\theta_n)}{n^{1/4}} + O(n^{-1/2+o(1)}) ,
\end{equation}
where
\[
\theta_n = 4g\sqrt{n}-\frac \pi 4 .
\]
(this approximation was apparently proposed in \cite{FKU}, see also \cite{Yanovich}).

Fix $\delta \in (0,1/4)$ small and let $N\gg 1$. We say that $n\in [N/2,N]$ is ``good'' if  
$$|\cos (\theta_n)| >N^{-1/4+\delta}.$$ 
Otherwise we say that  $n$  is ``bad''. 

%As we shall explain, an old result of Fej\'er (1920) says that the number of ``bad'' $n\in [N/2,N]$ is about $\approx N^{1/2+\delta}$, so that the number of  ``good'' $n\in[N/2,N]$ is $N/2-O(N^{1/2+\delta})$. 

%We argue that \eqref{eq:BZ} implies that

Let $x_n^\pm = \eigen_n^\pm+ g^2$ be the shifted eigenvalues,  and denote by  $\spec^\pm  = \{x_n^\pm \}$  the shifted spectra in each parity class.  

\begin{prop}
Let $N\geq N_0(g,\Delta)$ be sufficiently large. 
If  $n\in [N/2,N]$ is ``good'' then  $n$, $n\pm 1$ are not shifted eigenvalues  and either 

i) the interval $(n,n+1)$ contains both $x_n^-$ and $x_{n+1}^-$: $(n,n+1)\cap \spec^- =\{x_n^-,x_{n+1}^-\}$, 
and no elements of $\spec^+$ while the intervals $(n-1,n)$ and $(n+1,n+2)$ contain no elements of $\spec^-$, $(n-1,n)$ contains both $x_{n-1}^+$ and $x_n^+$, while $(n+1,n+2)$ contains both $x_{n+1}^+$ and $x_{n+2}^+$. 

or 

ii) Otherwise, the same holds with the roles of $\spec^-$ and $\spec^+$ reversed.

\end{prop}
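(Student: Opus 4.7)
The plan is to start from the Boutet de Monvel--Zielinski expansion \eqref{eq:BZ}, which with the shorthand $C := \Delta/\sqrt{2\pi g}>0$ reads
\[
x_n^\pm \;=\; n \;\mp\; \frac{C\,(-1)^n \cos\theta_n}{n^{1/4}} \;+\; O\!\bigl(n^{-1/2+o(1)}\bigr).
\]
The displacement of $x_n^\pm$ from $n$ is thus governed by the single quantity $(-1)^n\cos\theta_n$. For a good $n\in[N/2,N]$ the main correction has magnitude at least $C N^{-1/4+\delta}/n^{1/4}\gtrsim N^{-1/2+\delta}$, which dominates the $O(N^{-1/2+o(1)})$ error once $N$ is sufficiently large in $g,\Delta,\delta$. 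I therefore read off $\sign(x_n^\pm-n) = \mp\sign((-1)^n\cos\theta_n)$.

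Next I would transfer this sign information to the indices $n\pm 1$ and $n\pm 2$. Since $\theta_{n+k}-\theta_n = 4g(\sqrt{n+k}-\sqrt{n}) = O(1/\sqrt{N})$ for bounded $k$ and $n\in[N/2,N]$, one has $\cos\theta_{n+k} = \cos\theta_n + O(N^{-1/2})$, a perturbation much smaller than $|\cos\theta_n|>N^{-1/4+\delta}$. Hence $\cos\theta_{n+k}$ retains the sign and order of magnitude of $\cos\theta_n$, and the same estimate applied at $n+k$ yields, writing $\varepsilon := \sign((-1)^n\cos\theta_n)\in\{\pm 1\}$ and using $(-1)^{n+k}=(-1)^k(-1)^n$,
\[
\sign\bigl(x_{n+k}^\pm - (n+k)\bigr) \;=\; \mp\,(-1)^k\,\varepsilon, \qquad |k|\leq 2.
\]

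It then remains to split on $\varepsilon$ and enumerate. In the case $\varepsilon=+1$ the signs unfold as $x_n^+<n$, $x_n^->n$, $x_{n\pm 1}^+>n\pm 1$, $x_{n\pm 1}^-<n\pm 1$, $x_{n+2}^+<n+2$, $x_{n+2}^->n+2$, with every displacement of order $O(N^{-1/4})<1/2$. Since each $x_k^\pm$ lies within half a unit of $k$, the only candidates to enter a unit interval $(m,m+1)$ are $x_m^\pm$ and $x_{m+1}^\pm$, and inspecting these in $(n-1,n)$, $(n,n+1)$, $(n+1,n+2)$ reproduces pattern (i) exactly. The case $\varepsilon=-1$ swaps $\spec^+\leftrightarrow\spec^-$ throughout, giving (ii). The auxiliary claim that $n, n\pm 1$ are themselves not shifted eigenvalues follows from the same lower bound, since the only candidates within half a unit are $x_n^\pm, x_{n\pm 1}^\pm$, each displaced from its base integer by at least $\tfrac12 C N^{-1/2+\delta}>0$. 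The main, and essentially only, obstacle is the simultaneous sign bookkeeping across the six consecutive indices $n-2,\ldots,n+3$; this reduces to the routine comparisons $N^{-1/2+\delta}\gg N^{-1/2+o(1)}$ (so the BZ error cannot flip any of the relevant signs) and $N^{-1/4}<1/2$ (so no eigenvalue drifts into a non-adjacent unit interval), both automatic for $N$ large.
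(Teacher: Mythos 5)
Your proposal is correct and follows essentially the same route as the paper: the Boutet de Monvel--Zielinski expansion, the observation that $\cos\theta_{n+k}=\cos\theta_n+O(N^{-1/2})$ preserves sign and size for good $n$, and a sign/size comparison showing the main term dominates the error. Your compact sign formula $\sign(x_{n+k}^\pm-(n+k))=\mp(-1)^k\varepsilon$ and the explicit ``within half a unit'' remark just streamline the case-by-case bookkeeping that the paper carries out directly.
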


%Secondly, we argue that $|\cos (\theta_n)| >n^{-1/2+\delta}$  holds for all but $O(N^{1/2+\delta})$ choices of $n\leq N$.

\begin{proof}
Let $N\geq N_0(g,\Delta)$  be large, and take $n\in [N/2,N]$. %Consider \eqref{eq:BZ}.
Then 
\[
\theta_{n+1} = \theta_n + O\left(\frac 1{\sqrt{N}}\right)
\]
since
\[
\theta_{n+1} - \theta_n   = 4g\sqrt{n+1}-4g\sqrt{n} = \frac{4g}{\sqrt{n+1}+\sqrt{n}} \sim \frac{2g}{\sqrt{N}} .
\]
Hence
\[
\cos(\theta_{n+1}) = \cos(\theta_n) + O\left( \frac 1{\sqrt{N}} \right) ,
\]
and likewise
\[
\cos(\theta_{n-1}) = \cos(\theta_n) + O\left( \frac 1{\sqrt{N}} \right) .
\]
and the same holds for $\cos(\theta_{n\pm 2})$. 

Therefore, for ``good'' $n$,  if $\cos (\theta_n)>N^{-1/4+\delta}$ then $\cos (\theta_{n \pm 1}), \cos (\theta_{n \pm 2})>\frac 12 N^{-1/4+\delta}$ 
and in particular have the same sign as $\cos(\theta_n)$, and and analogous statement holds true if $\cos(\theta_n)<-N^{-1/4+\delta}$. 

Let's assume that $(-1)^n\cos(\theta_n)>N^{-1/4+\delta}$. Then 
\[
x_n^- -n =  \frac{\Delta}{\sqrt{2\pi g}}  \frac{(-1)^n \cos (\theta_n)}{n^{1/4}} + O(n^{-1/2+o(1)})  >\frac 12 \frac{\Delta}{\sqrt{2\pi g}} N^{-1/4+\delta} >  0
\]
so that $x_n^- \in (n,n+1)$. 
Moreover,  
\[
 (-1)^{n+1}\cos(\theta_{n\pm 1})=- (-1)^n\cos(\theta_{n\pm 1})<-\frac 12 N^{-1/4+\delta}<0
 \]
  because $\cos(\theta_{n\pm 1})$ has the same sign and roughly the same size as $\cos(\theta_n)$.  Hence
\[
\begin{split}
x_{n+1}^- -(n+1) &=  \frac{\Delta}{\sqrt{2\pi g}}  \frac{(-1)^{n+1} \cos (\theta_{n+1})}{n^{1/4}} + O(n^{-1/2+o(1)})
\\
& < 
- \;  \frac 14\frac{\Delta}{\sqrt{2\pi g}} N^{-1/2+\delta}<0  
\end{split}
\]
so that $x_{n+1}^-\in (n,n+1)$. Likewise $x_{n-1}^-< n-1$  so that $x_{n-1}^-\in (n-2,n-1)$, and $x_{n+2}^-\in (n+2,n+3)$, $x_{n-2}^-\in (n-2,n-1)$.  
Thus
\[
\spec^- \cap (n,n+1) =\{x_n^-,x_{n+1}^-\},
\]
\[
  \spec^- \cap (n-1,n) = \emptyset=\spec^- \cap (n+1,n+2)
\] 
 in this case.  
Furthermore, for the other parity class, we have
\[
x_n^+ -n = -\; \frac{\Delta}{\sqrt{2\pi g}}  \frac{(-1)^n \cos (\theta_n)}{n^{1/4}} + O(n^{-1/2+o(1)})<- \frac 12 \frac{\Delta}{\sqrt{2\pi g}} N^{-1/2+\delta}<0
\]
so that $x_n^+\in (n-1,n)$, and arguing as above we see that $x_{n+1}^+,x_{n+2}^+\in (n+1,n+2)$ and $x_{n-1}^+,x_{n-2}^+\in (n-1,n)$, so that 
\[
  \spec^+ \cap (n-1,n) = \{x_{n-2}^+, x_{n-1}^+\},
\quad 
\spec^+ \cap (n+1,n+2) = \{x_{n+1}^+, x_{n+2}^+\}
\] 
and $\spec^+\cap (n,n+1) = \emptyset$.

If $(-1)^n\cos(\theta_n)<- N^{-1/4+\delta}$ then we reverse the roles of the parity classes. 
%finding $\spec^-\cap (n,n+1) = \emptyset$ while $\spec^+\cap (n,n+1) =\{x_n^-,x_{n+1}^-\}$. 
\end{proof}

\section{Bounding the exceptional set}
To conclude the proof of Theorem~\ref{main thm},
we need to bound the number of ``bad'' $n\in [N/2,N]$,   %equivalently  $n\in [N/2,N]$ for which
 %\[
 %  \left| \sin(4g\sqrt{n} + \frac \pi 4) \right|<N^{-1/2+\delta} ,
 %\]
 that is $|\cos(\theta_n)|<N^{-1/4+\delta}$, which follows from
 \[
 \theta_n \bmod \pi \in \Big[\frac \pi 2 -N^{-1/4+\delta}, \frac \pi 2+N^{-1/4+\delta} \Big]
 \]
 or from
  \[
  (( \frac{ 4g}{\pi} \sqrt{n} + \frac 14 ))   \in \Big[ - \frac{ N^{-1/4+\delta}}{\pi},   \frac{ N^{-1/4+\delta}}{\pi} \Big]
 \]
 where $(( x )) =x-\lfloor x \rfloor\in [0,1) $ denotes the fractional part.
 
 An elementary argument due to Fej\'er (1920)  (see e.g. \cite[Chapter 1 \S 2]{KN}) 
 shows that for any $a>0$, and any shift $\gamma\in \R$, 
 %the fractional parts of $a\sqrt{n}+\gamma$ are uniformly distributed modulo one, in fact that  
 for suitable $c=c(a,\gamma)>0$, for $N\gg 1$, for any interval $[\alpha,\beta]\in [0,1]$, 
\begin{equation*}%\label{eq:Fejer}
\left|  \#\left\{n\in [N/2,N]: ((a\sqrt{n} +\gamma )) \in [\alpha, \beta] \right\} - (\beta-\alpha)\frac N2 \right|\leq c\sqrt{N}.
\end{equation*} 
and in particular, for an interval of length $ >N^{-1/2+o(1)}$ % \marginpar{\textcolor{red}{fix this}}
   the number of fractional parts which fall into that interval is asymptotically $N/2$ times the length of that interval.  In our case, the length of the interval is $\frac{2}{\pi}N^{-1/4+\delta}$ and we obtain that the number of ``bad'' $n\in [N/2,N]$ is about $N^{3/4+\delta}$, as claimed.

For the readers' benefit, we recall Fej\'er's argument for the case of  the fractional parts of $\sqrt{n}$.  If $k^2\leq n<(k+1)^2$ then 
$((\sqrt{n})) = \sqrt{n}-k$,  
and then $((\sqrt{n}))\in [\alpha,\beta]$ means $\alpha\leq \sqrt{n}-k \leq \beta$ or $(k+\alpha)^2\leq n \leq (k+\beta)^2$, so that $n$ lies in an interval of length $2k(\beta-\alpha)+O(1)$. Summing over $k$ we see that the number of $n\in [N/2,N]$ with $((\sqrt{n}))\in [\alpha,\beta]$ is 
\[
\sum_{\sqrt{N/2}\leq k <\sqrt{N}} \left\{ 2k\left(\beta-\alpha\right)+O\left(1\right) \right\} =  (\beta-\alpha) \frac{N}{2} + O(\sqrt{N}).
\]

\section{Proof of Theorem~\ref{thm:arcsine law}}

\begin{proof}
We want to count $ \#\Big\{n\leq N: \delta_n^- \in [\alpha,\beta]\Big\} $. 
According to \eqref{eq:BZ}, we have
\[
\delta_n^- =C (-1)^n \cos\left( 4g\sqrt{n}-\frac \pi 4\right)  + O(n^{-1/4+o(1)})
\]
with
\[
C=  \frac{\Delta}{\sqrt{2\pi g}} .
\]
For the purpose of understanding the distribution of $\delta_n^-$, we may ignore the remainder term.

Writing   $\varphi_n = \frac{2g}{\pi }\sqrt{n} - \frac 18$, 
we observe that $\cos\left( 4g\sqrt{n}-\frac \pi 4\right)  = \cos(2\pi \varphi_{n})$ depends only on the fractional part $((\varphi_n))\in [0,1)$ of $\varphi_n$. 

We split the range $n\in [1,N]$ into even and odd $n$'s. 
First take $n=2m$ even.  Then  $\delta_{2m}^- \in [\alpha,\beta]$ is equivalent to 
\[
\widehat\alpha \leq  \cos (2\pi \varphi_{2m} ) \leq \widehat\beta
\]
where %$\varphi_n:=\frac{4g}{\pi}\sqrt{n}-\frac 1 4$, and
\[
\widehat\alpha:=\frac{\alpha}{C}, \quad \widehat\beta:= \frac{\beta}{C}.
\]
Writing $\mathbf 1_{[\widehat \alpha,\widehat \beta]}$ for the indicator function  of the interval $[\widehat \alpha,\widehat \beta]$, we have
\[
  \#\Big\{ 2m\leq N: \delta_{2m}^-\in [\alpha,\beta] \Big\} = 
 \sum_{m=1}^{N/2} \mathbf 1_{[\widehat \alpha,\widehat \beta]}\left(\cos\left( 2\pi \varphi_{2m}\right)\right) .
\]
Now the function $\mapsto  \mathbf 1_{[\widehat \alpha,\widehat \beta]}\left(\cos \left(2\pi x\right)\right)$ is Riemann integrable, and from uniform distribution of the fractional parts of $\varphi_{2m}$ (Fej\'er's theorem) it follows that (see e.g. \cite[Chapter 1, \S1]{KN})
\[
\lim_{N\to \infty} \frac 1{  N/2  } \sum_{m=1}^{N/2} \mathbf 1_{[\widehat \alpha,\widehat \beta]}\left(\cos\left( 2\pi \varphi_{2m}\right)\right) =\int_{-1/2}^{1/2}  \mathbf 1_{[\widehat \alpha,\widehat \beta]}\left(\cos \left(2\pi x\right)\right) dx  . 
\]
Now
%as $N\to \infty$, the RHS above converges to the integral
\[
\begin{split}
\int_{-1/2}^{1/2}  \mathbf 1_{[\widehat \alpha,\widehat \beta]}\left(\cos \left(2\pi x\right)\right) dx  
&= \frac 1\pi \int_0^\pi  \mathbf 1_{[\widehat \alpha,\widehat \beta]}\left(\cos \left(y\right)\right) dy 
\\
%&=\frac 1\pi \left(\arccos(\widehat \alpha)-\arccos(\widehat \beta) \right) 
%\\  &= \frac 1{\pi}\int_{\widehat \beta}^{\widehat \alpha}  (\arccos)'(x) dx 
& =
   \frac 1{\pi} \int_{\widehat \alpha}^{\widehat\beta} \frac{dt}{\sqrt{1-t^2}}
   \\
   &= \frac 1{\pi} \int_\alpha^\beta \frac{C dy}{\sqrt{1-C^2 y^2}}  
   =  \int_\alpha^\beta \frac {dy}{\pi \sqrt{ \frac{2\pi g}{\Delta^2} -y^2}} .
\end{split}
\]
Therefore we obtain
\[
\lim_{N\to \infty} 
\frac 1{ N   } \#\Big\{ 2m\leq N: \delta_{2m}^-\in [\alpha,\beta] \Big\} = \frac 12 \int_\alpha^\beta \frac {dy}{\pi \sqrt{ \frac{2\pi g}{\Delta^2} -y^2}} .
\]

The same considerations are valued in the case that $n=2m+1$ is odd, except that we require
\[
 \widehat\alpha \leq  - \cos (2\pi \varphi_{2m} ) \leq \widehat\beta ,
\]
leading to   the integral 
\[
\int_{-1/2}^{1/2}  \mathbf 1_{[-\widehat \beta,-\widehat \alpha]}\left( \cos \left(2\pi x\right)\right) dx  
\]
which gives the same result.

Altogether we obtain
\[
 \lim_{N\to \infty}\frac 1N \#\Big\{\delta_n^- \in [\alpha,\beta] \Big\} 
 = \int_\alpha^\beta \frac {dy}{\pi \sqrt{ \frac{2\pi g}{\Delta^2} -y^2}} .
\]  
The argument for $\delta_n^+$ is identical. 
\end{proof}

\end{document}